\theoremstyle{plain}
\newtheorem{lem}{Lemma}[section]
\newtheorem{prop}[lem]{Proposition}
\newtheorem{thm}[lem]{Theorem}
\theoremstyle{definition}
\newtheorem{defn}[lem]{Definition}
\newtheorem{ex}[lem]{Example}
\newtheorem{question}[lem]{Question}
\newtheorem{disc}[lem]{Remark}
\newtheorem{convention}[lem]{Convention}
\newcommand{\id}{\operatorname{id}}
\newcommand{\ann}{\operatorname{Ann}}
\newcommand{\HH}{\operatorname{H}}
\newcommand{\Hom}{\operatorname{Hom}}
\newcommand{\shift}{\mathsf{\Sigma}}
\newcommand{\cone}{\operatorname{Cone}}
\newcommand{\ideal}[1]{\mathfrak{#1}}
\newcommand{\m}{\ideal{m}}
\newcommand{\n}{\ideal{n}}
\newcommand{\comp}[1]{\widehat{#1}}
\newcommand{\bbz}{\mathbb{Z}}
\newcommand{\bbn}{\mathbb{N}}
\newcommand{\from}{\leftarrow}
\newcommand{\xra}{\xrightarrow}
\newcommand{\onto}{\twoheadrightarrow}
\newcommand{\vf}{\varphi}
\newcommand{\x}{\mathbf{x}}
\renewcommand{\geq}{\geqslant}
\renewcommand{\leq}{\leqslant}
\newcommand{\Ext}[4][R]{\operatorname{Ext}_{#1}^{#2}(#3,#4)}
\newcommand{\Otimes}[3][R]{#2\otimes_{#1}#3}
\renewcommand{\Hom}[3][R]{\operatorname{Hom}_{#1}(#2,#3)}
\newcommand{\K}{K}
\newcommand{\KK}{\widetilde K}
\numberwithin{equation}{lem}
\begin{document}

\bibliographystyle{amsplain}

\author{Benjamin J. Anderson}

\author{Sean Sather-Wagstaff}

\address{Department of Mathematics,
NDSU Dept \# 2750,
PO Box 6050,
Fargo, ND 58108-6050
USA}

\email{benjamin.j.anderson@ndsu.edu}

\email{sean.sather-wagstaff@ndsu.edu}

\urladdr{http://www.ndsu.edu/pubweb/\~{}benjaand/}

\urladdr{http://www.ndsu.edu/pubweb/\~{}ssatherw/}

\thanks{This material is based on work supported by North Dakota EPSCoR and National 
Science Foundation Grant EPS-0814442.}

\title{NAK for Ext and Ascent of module structures}

\date{\today}

\dedicatory{To Roger A. Wiegand on the occasion of his retirement}

\keywords{ascent, Ext, flat homomorphism, NAK}

\subjclass[2010]{Primary: 13B40, 13D07; Secondary: 13D02}

\begin{abstract}
We investigate the interplay between properties of Ext modules
and ascent of module structures along local ring homomorphisms.
Specifically, let $\vf\colon (R,\m,k)\to (S,\m S,k)$ be a flat local ring homomorphism.
We show that if $M$ is a finitely generated $R$-module such that
$\Ext{i}{S}{M}$ satisfies NAK (e.g. if $\Ext{i}{S}{M}$ is finitely generated over $S$) for $i=1,\ldots,\dim_{R}(M)$, then $\Ext{i}{S}{M}=0$ for all $i\neq0$ and $M$ has an $S$-module structure that is 
compatible with its $R$-module structure via $\varphi$.
We  provide  explicit  computations of $\Ext{1}{S}{M}$ to indicate how large it
can be when $M$ does not have a compatible $S$-module~structure.
\end{abstract}

\maketitle

\section{Introduction} \label{sec0}
Throughout this paper $(R, \mathfrak m, k)$ and $(S,\n,l)$ are commutative noetherian 
local rings. Given an $R$-module $M$, the $\m$-adic completion of $M$
is denoted $\comp M$. 

The genesis for this paper begins with the following result of Buchweitz
and Flenner~\cite[Theorem 2.3]{buchweitz:psrp}.

\begin{thm}  \label{thm110620a}
Let $M$ be an $R$-module. If $M$ is $\m$-adically complete, then 
for each flat $R$-module $F$ one has $\Ext{i}{F}{M}=0$ for all $i\geq1$.
\end{thm}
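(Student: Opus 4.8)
The plan is to leverage the completeness of $M$ by writing $M=\comp{M}=\lim_n M/\m^n M$ and pushing the computation of $\Ext{i}{F}{M}$ down to the Artinian quotients $R/\m^n$, where the flatness of $F$ becomes extremely rigid. First I would fix a free resolution $P_\bullet\to F$. Since $\Hom{P_j}{-}$ commutes with inverse limits, the complex $\Hom{P_\bullet}{M}$ is the inverse limit of the tower $\{\Hom{P_\bullet}{M/\m^n M}\}_n$; moreover, each $P_j$ being projective, $\Hom{P_j}{-}$ carries the surjections $M/\m^{n+1}M\onto M/\m^n M$ to surjections, so this tower of complexes has surjective transition maps in every degree. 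The associated Milnor $\lim^1$ sequence for the cohomology of the limit then gives, for each $i$,
\[
0\to \lim^1_n\Ext{i-1}{F}{M/\m^n M}\to\Ext{i}{F}{M}\to\lim_n\Ext{i}{F}{M/\m^n M}\to 0,
\]
and the whole statement reduces to understanding the two outer terms.

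The crucial finite-level input is that $\Ext{i}{F}{M/\m^n M}=0$ for all $i\geq1$ and all $n$. I would first establish the change-of-rings isomorphism $\Ext{i}{F}{N}\cong\Ext[R/I]{i}{F/IF}{N}$ for every ideal $I$ and every $R/I$-module $N$: applying $-\otimes_R R/I$ to $P_\bullet$ yields a complex of projective $R/I$-modules with homology $\Tor{j}{F}{R/I}$, which vanishes for $j\geq1$ because $F$ is flat, so $P_\bullet\otimes_R R/I$ is a projective resolution of $F/IF$ over $R/I$, and Hom-tensor adjunction identifies the two Ext complexes. Taking $I=\m$ shows $\Ext{i}{F}{V}\cong\Ext[k]{i}{F/\m F}{V}=0$ for $i\geq1$ whenever $V$ is a $k$-vector space, since $F/\m F$ is free over $k$. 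An induction on $n$, using the short exact sequences $0\to\m^n M/\m^{n+1}M\to M/\m^{n+1}M\to M/\m^n M\to0$ whose left-hand terms are $k$-vector spaces, then propagates this vanishing to all of the $M/\m^n M$.

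With the finite-level vanishing in hand, the case $i\geq2$ is immediate: both $\Ext{i}{F}{M/\m^n M}$ and $\Ext{i-1}{F}{M/\m^n M}$ vanish, so both outer terms of the Milnor sequence are zero and $\Ext{i}{F}{M}=0$. The subtle case, and the step I expect to be the main obstacle, is $i=1$, where the sequence collapses to $\Ext{1}{F}{M}\cong\lim^1_n\Hom{F}{M/\m^n M}$. To kill this $\lim^1$ I would show that the transition maps $\Hom{F}{M/\m^{n+1}M}\to\Hom{F}{M/\m^n M}$ are surjective: the long exact sequence in $\Hom{F}{-}$ attached to $0\to\m^n M/\m^{n+1}M\to M/\m^{n+1}M\to M/\m^n M\to0$ places the obstruction to surjectivity inside $\Ext{1}{F}{\m^n M/\m^{n+1}M}$, which vanishes by the $k$-vector space computation above. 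A tower with surjective transition maps is Mittag-Leffler, so its $\lim^1$ vanishes, yielding $\Ext{1}{F}{M}=0$ and finishing the proof.
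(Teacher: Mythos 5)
The paper does not actually prove this statement---it is quoted from Buchweitz--Flenner \cite[Theorem 2.3]{buchweitz:psrp}---so there is no internal proof to compare against. Your argument is correct and essentially reproduces the standard proof of that result: write $M=\lim_n M/\m^nM$, observe that the tower $\Hom{P_\bullet}{M/\m^nM}$ is degreewise surjective so the Milnor sequence applies, kill the finite-level groups $\Ext{i}{F}{M/\m^nM}$ for $i\geq1$, and then kill the remaining $\lim^1$ term in degree $1$ via Mittag--Leffler. All the individual steps check out: the change-of-rings isomorphism $\Ext{i}{F}{N}\cong\Ext[R/I]{i}{F/IF}{N}$ is valid because flatness of $F$ makes $P_\bullet\otimes_RR/I$ a projective $R/I$-resolution of $F/IF$; the induction on $n$ through the $k$-vector-space kernels is sound (and is a nice way to avoid invoking the theorem that flat modules over the Artinian ring $R/\m^n$ are projective, which is the other common route to the finite-level vanishing); and the surjectivity of the transition maps on $\Hom{F}{M/\m^nM}$ follows exactly as you say from the vanishing of $\Ext{1}{F}{\m^nM/\m^{n+1}M}$. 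You correctly isolated $i=1$ as the only delicate case. This is a complete and self-contained proof.
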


The converse fails with no extra assumptions on $M$, as we see next. 

\begin{ex}
Let $M$ be a non-zero injective $R$-module, and 
assume that $R$ has positive depth (e.g.,  $R$ is a domain and not a field).
Then $\Ext{i}{-}{M}=0$ for all $i\geq1$.
However, the
fact that $M$ is injective implies that it is divisible. Thus, we have
$M=\mathfrak{m}M$, and it follows that $\comp{M}=0\neq M$, so $M$ is not complete.
\end{ex}

The next result 
of Frankild and Sather-Wagstaff~\cite[Corollary~3.5]{frankild:dcev} shows that the converse 
to Theorem~\ref{thm110620a} does hold when $M$ is finitely generated.

\begin{thm}  \label{cor110620}
Let $M$ be a finitely generated $R$-module. Then the following conditions are equivalent.
\begin{enumerate}[\rm(i)]
\item \label{cor110620a}
$M$ is $\mathfrak{m}$-adically complete.
\item $\Ext{i}{F}{M}=0$ for all $i\geq1$ for each flat $R$-module $F$.
\item $\Ext{i}{\comp{R}}{M}=0$ for all $i\geq1$.
\end{enumerate}
\end{thm}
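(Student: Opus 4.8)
The plan is to prove the implications (i) $\Rightarrow$ (ii) $\Rightarrow$ (iii) $\Rightarrow$ (i). The first two are formal. If $M$ is complete, then Theorem~\ref{thm110620a} applies to every flat $F$, giving (ii); and since the completion $\comp{R}$ of a noetherian local ring is (faithfully) flat over $R$, condition (iii) is the instance $F=\comp{R}$ of (ii). All the work is in (iii) $\Rightarrow$ (i), which I would phrase as the vanishing of $Q:=\comp{M}/M$: the map $M\to\comp{M}$ is injective because $M$ is finitely generated (Krull intersection), so $M$ is complete exactly when $Q=0$. Before using the hypothesis I would record the structure of $D:=\comp{R}/R$. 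Faithful flatness gives $R+\m\comp{R}=\comp{R}$, so $\m D=D$, i.e.\ $D$ is $\m$-divisible. Applying $\Hom{-}{M}$ and $-\otimes_R M$ to $0\to R\to\comp{R}\to D\to 0$ then yields $\Hom{D}{M}=0$ (any such map lands in $\bigcap_n\m^nM=0$) and $\Tor{1}{D}{M}\cong\ker(M\to\comp{M})=0$, whence $D\otimes_RM\cong Q$; in particular $Q$ is itself $\m$-divisible.

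Next I would feed the hypothesis into $0\to M\to\comp{M}\to Q\to 0$. Since $\comp{M}$ is finitely generated over $\comp{R}$ and $\m$-adically complete, Theorem~\ref{thm110620a} gives $\Ext{i}{\comp{R}}{\comp{M}}=0$ for $i\geq1$; together with (iii), the long exact sequence for $\Ext{\bullet}{\comp{R}}{-}$ forces $\Ext{i}{\comp{R}}{Q}=0$ for all $i\geq1$, along with a short exact sequence $0\to\Hom{\comp{R}}{M}\to\Hom{\comp{R}}{\comp{M}}\to\Hom{\comp{R}}{Q}\to 0$. Thus the Ext-vanishing propagates from $M$ to the divisible module $Q$, and the problem is reduced to a structural one: a nonzero $\m$-divisible module $Q$ of this form cannot satisfy $\Ext{i}{\comp{R}}{Q}=0$ for all $i\geq1$.

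The main obstacle is exactly this last step, and it is where finite generation of $M$ must enter non-formally. The difficulty is real: for non-complete $R$ and $M=R$ one has $Q=\comp{R}/R\neq0$, so the theorem asserts $\Ext{i}{\comp{R}}{R}\neq0$ for some $i\geq1$, meaning the proof must produce a nonzero Ext-class from the ``wild'' (non-continuous) $R$-homomorphisms out of $\comp{R}$; a Matlis-duality shortcut fails here precisely because biduality for artinian modules collapses over a non-complete ring. To close the gap I would attempt induction on $\dim_R M$: finite-length modules are complete outright, the $\m$-torsion submodule splits off cleanly using Theorem~\ref{thm110620a} and the five lemma, and a nonzerodivisor $x\in\m$ drops the dimension with $M/xM$ again satisfying (iii), while the snake lemma shows $x$ acts invertibly on $Q$. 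Turning this invertibility against the $\m$-divisibility of $Q$---equivalently, gaining genuine control of $\Hom{\comp{R}}{Q}$ and the extensions it represents---is the crux; it is the substance of the cited theorem of Frankild and Sather-Wagstaff, whose proof ultimately relies on derived completion (local homology) rather than on the naive induction by itself.
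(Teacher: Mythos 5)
Your handling of (i)$\Rightarrow$(ii)$\Rightarrow$(iii) is correct, and the bookkeeping around $D=\comp{R}/R$ and $Q=\comp{M}/M$ is sound: $D$ is flat and $\m$-divisible, $\Hom{D}{M}=0$, $\Tor{1}{D}{M}=0$, $D\otimes_RM\cong Q$, and the long exact sequence does propagate the Ext-vanishing from $M$ to $Q$. But the proposal does not prove the theorem. The implication (iii)$\Rightarrow$(i) is the entire content of the statement, and you have only reduced it to the assertion that a nonzero $\m$-divisible module of the form $Q=D\otimes_RM$ cannot satisfy $\Ext{i}{\comp R}{Q}=0$ for all $i\geq 1$ (together with surjectivity of $\Hom{\comp R}{\comp M}\to\Hom{\comp R}{Q}$). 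The proposed induction on $\dim_R(M)$ does not close this: after the finite-length base case and reduction modulo a nonzerodivisor you are left, already in dimension one with $M=R$, with exactly the question of whether $\Ext{1}{\comp R}{R}$ can vanish for a non-complete $R$, and no nonzero Ext-class is produced. You concede the point by deferring the crux to the very theorem of Frankild and Sather-Wagstaff being proved, which makes the argument circular as a proof of Theorem~\ref{cor110620}.

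For context, the paper itself does not reprove this result; it quotes it from \cite{frankild:dcev} and explicitly notes that the proof there is ``quite technical, relying heavily on the machinery of derived local homology and derived local cohomology'' --- in essence, an identification of $\Rhom{\comp R}{M}$ with the derived $\m$-adic completion of $M$, which is precisely the non-formal input your outline is missing. Your instinct that no soft argument closes the gap is thus consistent with the paper's own remark, but as written the proposal establishes only the easy implications. (The paper's Propositions~\ref{prop110526a} and~\ref{prop110526a'} verify the missing step for discrete valuation rings by explicit computation, exhibiting $E\neq0$ as a direct summand of $\Ext{1}{\comp R}{R}$; this is a useful sanity check but not a substitute for the general argument.)
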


It is worth noting that the proof of this result is quite technical, relying heavily on the  machinery of
derived local homology and derived local cohomology.

Since the module $M$ in Theorem~\ref{cor110620} is finitely generated, a standard result shows that
condition~\eqref{cor110620a} is equivalent to the following:
$M$ has an $\comp{R}$-module structure that is compatible with its $R$-module structure 
via the natural map $R\to\comp{R}$.
Thus, we consider the following  ascent question, focusing on homological conditions.

\begin{question}\label{qst110702}
Given a ring homomorphism $\varphi\colon R\to S$ what conditions on an $R$-module $M$ guarantees that it has an $S$-module structure compatible with its $R$-module structure via $\varphi$?
\end{question}

In the setting of Question~\ref{qst110702}, 
the module $\Otimes SM$ has a natural $S$-module
structure. Thus, if one had an $R$-module isomorphism 
$M\cong\Otimes SM$, then one could transfer
the  $S$-module structure from $\Otimes SM$ to $M$.
One can similarly inflict an $S$-module structure on $M$ if $M\cong\Hom SM$.
The following result of Frankild, Sather-Wagstaff, and R.~Wiegand~\cite[Main Theorem 2.5]{frankild:amsveem} 
and Christensen and Sather-Wagstaff~\cite[Theorem 3.1 and Remark 3.2]{christensen:tgdrh} 
shows that, when $\vf$ is a local homomorphism
with properties like those of the natural map $R\to\comp R$,
these are in fact the only way for $M$ to admit a 
compatible $S$-module structure.

\begin{thm}  \label{110620c}
Let $\varphi\colon R\to S$ be a flat local ring homomorphism such that 
the induced map $R/\mathfrak{m}\to S/\mathfrak{m}S$ is an isomorphism.
Let $M$ be a finitely generated $R$-module, and consider  the following conditions:
\begin{enumerate}[\rm(i)]
\item \label{110620c1}
$M$ has an $S$-module structure compatible with its $R$-module structure via $\varphi$.
\item \label{110620c2}
$\Ext{i}{S}{M}=0$ for all $i\geq1$.
\item \label{110620c3}
$\Ext{i}{S}{M}$ is finitely generated over $R$ for  $i=1,\ldots,\dim(M)$.
\item \label{110620c4}
the natural map $\Hom{S}{M}\to M$ given by $f\mapsto f(1)$ is an isomorphism.
\item \label{110620c5}
the natural map $M\to\Otimes{S}{M}$ given by $m\mapsto 1\otimes m$ is an isomorphism.
\item \label{110620c6}
$\Otimes{S}{M}$ is finitely generated over $R$.
\item \label{110620c7}
$\Ext{i}{S}{M}$ is finitely generated over $S$ for  $i=1,\ldots,\dim_R(M)$.
\end{enumerate}
Then conditions~\eqref{110620c1}--\eqref{110620c6} are equivalent and imply
condition~\eqref{110620c7}. 
If $R$ is Gorenstein, then conditions~\eqref{110620c1}--\eqref{110620c7} are equivalent.
\end{thm}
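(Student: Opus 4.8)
The plan is to show that each of conditions~(i)--(vii) is equivalent to $M$ being $\m$-adically complete, organizing the argument so that the ``soft'' implications (completeness implies each condition) are separated from the substantive ones (each condition forces completeness). Two facts are used throughout. Since $\vf$ is flat and local it is faithfully flat, so $\Otimes S{-}$ is exact and detects vanishing. Since the closed fibre is $S/\m S=k$ and $S$ is $R$-flat, the maps $R/\m^nR\to S/\m^nS$ are isomorphisms (injective by flatness, with equal length), whence $\comp\vf\colon\comp R\to\comp S$ is an isomorphism; in particular the $\m$-adic and $\n$-adic topologies agree on every finitely generated $S$-module.

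The structural conditions~(i),~(v),~(vi) form an elementary circle. Put $\beta\colon M\to\Otimes SM$, $m\mapsto 1\otimes m$, and $\gamma\colon\Hom SM\to M$, $f\mapsto f(1)$. Clearly (v)$\Rightarrow$(vi). For (vi)$\Rightarrow$(v), the map $k\otimes_R\beta$ is bijective because $k\otimes_R S=k$; hence $\coker\beta$ is a finitely generated $R$-module with $k\otimes_R\coker\beta=0$, so NAK forces $\beta$ surjective, while the retraction of $\Otimes S\beta$ coming from the multiplication $\Otimes SS\to S$ shows $\Otimes S\beta$ to be a split monomorphism, and faithful flatness gives $\ker\beta=0$. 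For (i)$\Leftrightarrow$(v): a compatible $S$-structure yields an $S$-linear $\mu\colon\Otimes SM\to M$ with $\mu\beta=\id_M$, so $\Otimes SM=\beta(M)\oplus\ker\mu$; comparing $k$-dimensions gives $k\otimes_R\ker\mu=0$, and as $\ker\mu$ is a finitely generated $S$-module, NAK over $S$ forces $\ker\mu=0$, i.e.\ $\beta$ is invertible; conversely an isomorphism $\beta$ transports the $S$-structure of $\Otimes SM$ to $M$ compatibly with $\vf$. Condition~(iv) enters dually, via the $S$-linear section $\delta\colon M\to\Hom SM$, $\delta(m)(s)=sm$, of $\gamma$; killing $\ker\gamma$ again reduces to a NAK argument, which one may run either directly or after invoking the completeness characterization below.

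It remains to tie this circle to completeness and to the Ext conditions. If $M$ is complete then it is an $\comp R$-module, and restricting along $S\to\comp S=\comp R$ gives a compatible $S$-structure, so completeness implies~(i). Moreover Buchweitz and Flenner's vanishing (Theorem~\ref{thm110620a}), applied to the flat module $S$, yields $\Ext iSM=0$ for all $i\geq1$, which is~(ii); and~(ii) implies~(iii) and~(vii) trivially, the zero module being finitely generated over both rings. The heart of the theorem is the reverse passage, for which it suffices to prove that~(iii) forces completeness. Writing $Q=\coker\vf$, the sequence $0\to R\to S\to Q\to0$ produces, on applying $\Hom{-}M$ and $\Otimes{-}M$, long exact sequences that identify the defect of $\gamma$ and $\beta$ with $\Hom QM$, $\Otimes QM$, $\Tor1QM$ and give $\Ext iSM\cong\Ext{i+1}QM$ for $i\geq1$. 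One must then show that the finiteness hypothesis on these finitely many Ext modules forces them to vanish. This ``NAK for Ext'' step is the main obstacle: it is exactly the point at which the cited proofs invoke derived local homology and cohomology (as behind Theorem~\ref{cor110620}), the degree bound $i\leq\dim_R M$ limiting the range that must be controlled and driving an induction on dimension.

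Finally, when $R$ is Gorenstein one appends~(vii) by proving that it implies~(iii); the issue is to upgrade finite generation of $\Ext iSM$ from $S$ to $R$. Here $S$ is again Gorenstein (flat over Gorenstein $R$ with field fibre), and a local-duality computation controls the $S$-modules $\Ext iSM$ in the range $i\leq\dim_R M$, showing that their finite generation over $S$ cannot exceed finite generation over $R$; matching this range against the hypothesis $i=1,\dots,\dim_R M$ returns condition~(iii). The delicate point, and where Gorensteinness is essential, is the compatibility of the $S$-module structure on $\Ext iSM$ with the duality carried out over $R$.
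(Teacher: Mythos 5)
A preliminary remark: the paper does not prove Theorem~\ref{110620c}; it quotes it from Frankild--Sather-Wagstaff--Wiegand and Christensen--Sather-Wagstaff, and the entire point of the present paper is to supply a new, elementary proof of the one hard implication (finiteness conditions on Ext force vanishing) via Koszul complexes in Lemma~\ref{lem110907a} and Theorem~\ref{thm110619a}. So your proposal must stand on its own, and it does not, for two reasons.

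First, your organizing principle---that each of conditions (i)--(vii) is equivalent to $M$ being $\m$-adically complete---is false. Take $\vf$ to be the identity map of a non-complete $R$ and $M=R$: the hypotheses hold, all seven conditions hold trivially, yet $M$ is not complete. Completeness implies the conditions (your argument there is fine), but the converse holds only when $S=\comp R$; in particular your reduction ``it suffices to prove that (iii) forces completeness'' asks for something strictly stronger than the theorem and unprovable. The correct hub is condition (i) or (ii), not completeness. Second, and more seriously, the implication carrying all the content---that finite generation of $\Ext{i}{S}{M}$ for $i=1,\ldots,\dim_R(M)$ forces $\Ext{i}{S}{M}=0$ for all $i\geq 1$---is never proved: you set up the sequence $0\to R\to S\to S/R\to 0$, then write ``One must then show\dots'' and label the step ``the main obstacle,'' deferring to the derived local (co)homology of the cited papers without supplying an argument. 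The Gorenstein implication (vii)$\Rightarrow$(iii) is likewise only a gesture at local duality. Your elementary circle (i)$\Leftrightarrow$(v)$\Leftrightarrow$(vi) via Nakayama and faithful flatness is correct and standard, but those are the soft parts. For a self-contained route to the hard step, see Lemma~\ref{lem110907a}: letting $\x$ generate $\m$ and $J$ be an injective resolution of $M$, the quasiisomorphism of Lemma~\ref{lem110619e} identifies $\Ext{z}{S}{M}/\x\Ext{z}{S}{M}$ with $\HH_{-z}(\Otimes{K^R(\x)}{M})=0$ for $z\geq 1$ equal to the top nonvanishing degree, after which NAK (hence in particular finite generation over $R$ or over $S$) kills $\Ext{z}{S}{M}$; descending induction then gives (iii)$\Rightarrow$(ii) and (vii)$\Rightarrow$(ii) with no Gorenstein hypothesis.
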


The proof of this result is less technical than that of Theorem~\ref{cor110620}.
But it does use the Amplitude Inequality of Foxby, Iyengar, and 
Iversen~\cite{foxby:daafuc,iversen:aifc}---a consequence of the New Intersection Theorem---and derived Gorenstein injective dimension.

The next result is the main theorem of the current paper. It contains several
improvements on Theorem~\ref{110620c}.
First, it removes the Gorenstein hypothesis for the implication
$\eqref{110620c7}\implies\eqref{110620c1}$.
Second, it further relaxes the conditions on the Ext-modules needed to obtain
an $S$-module structure on $M$.
Third, the proof is significantly less technical than the proofs of these earlier results,
relying only on basic properties of  Koszul complexes.
It is proved in Theorem~\ref{thm110619a}.

\begin{defn}\label{defn 110211}
An $R$-module $N$ \emph{satisfies NAK} if  $N=0$ or $N/\mathfrak{m}N\neq0$. 
\end{defn}

\begin{thm}\label{thm110703a} 
In Theorem~\ref{thm110619a}
the conditions~\eqref{110620c1}--\eqref{110620c7} are equivalent, and they are equivalent to the
following:
\begin{enumerate}[\rm(viii)]
\item \label{110620c8}
$\Ext{i}{S}{M}$ satisfies NAK for  $i=1,\ldots,\dim_R(M)$.
\end{enumerate}
\end{thm}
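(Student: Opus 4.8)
The plan is to reduce the new condition~\eqref{110620c8} into the chain of equivalences already available from Theorem~\ref{110620c}. Since~\eqref{110620c7} trivially implies~\eqref{110620c8} (any $S$-module that is finitely generated over $S$ certainly satisfies NAK, because finitely generated modules over noetherian local rings satisfy NAK by the usual Nakayama lemma), and~\eqref{110620c8} is implied by~\eqref{110620c1}--\eqref{110620c6} via~\eqref{110620c7}, the entire content is the single implication $\eqref{110620c8}\implies\eqref{110620c1}$. Thus I would aim to prove: if $\Ext{i}{S}{M}$ satisfies NAK for $i=1,\ldots,\dim_R(M)$, then $M$ admits a compatible $S$-module structure.

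Since the authors advertise that the new proof relies \emph{only on basic properties of Koszul complexes}, the central device should be a Koszul complex on a system of parameters. First I would choose a sequence $\underline{x}=x_1,\ldots,x_d\in\m$ (with $d=\dim_R(M)$) that is a system of parameters for $M$, and consider the Koszul complex $K=K^R(\underline{x})$. Because $\vf$ is flat, $\Otimes SK\cong K^S(\vf(\underline{x}))$, and the images $\vf(\underline x)$ form a system of parameters in $S$ as well. The key structural observation I would exploit is that NAK is precisely the statement that lets one run a Nakayama-style vanishing argument: an $R$-module $N$ satisfying NAK with $\m N=N$ must vanish. The engine of the proof should be to combine this with the flatness of $\vf$ and the finite length of the Koszul homology modules to force the relevant $\Ext{i}{S}{M}$ to satisfy $\m\Ext{i}{S}{M}=\Ext{i}{S}{M}$, and hence vanish. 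Establishing condition~\eqref{110620c2} for $i=1,\ldots,d$ in this way, and then bootstrapping to all $i\geq1$, is the natural route; once~\eqref{110620c2} holds, Theorem~\ref{110620c} hands back~\eqref{110620c1}.

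Concretely, I expect the argument to proceed by descending induction on $\dim_R(M)$, or equivalently by an induction that strips off one parameter at a time. The base case $\dim_R(M)=0$ means $M$ has finite length, so $M$ is $\m$-adically complete and the result is classical (or follows directly from Theorem~\ref{thm110620a} applied to the flat module $S$). For the inductive step I would pick a parameter $x\in\m$ that is a nonzerodivisor on a suitable quotient or on $M$ modulo torsion, use the short exact sequences coming from multiplication by $x$ to relate $\Ext{i}{S}{M}$ to the corresponding Ext over a module of smaller dimension, and verify that the NAK property is inherited by those smaller Ext modules so the inductive hypothesis applies. The point is that multiplication by $x$ induces maps on the $\Ext{i}{S}{-}$ whose cokernels and kernels involve Ext of lower-dimensional modules, and NAK plus the snake lemma should let me conclude the needed vanishing.

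The main obstacle, I expect, is the bookkeeping that converts the NAK hypothesis on finitely many Ext modules over $S$ into an honest Nakayama vanishing. The subtlety is that $\Ext{i}{S}{M}$ need not be finitely generated over either ring a priori, so I cannot simply invoke Nakayama's lemma; the hypothesis~\eqref{110620c8} is exactly the surrogate that makes the vanishing go through, and I must be careful to produce the surjectivity $\m\Ext{i}{S}{M}=\Ext{i}{S}{M}$ genuinely rather than assuming finite generation. Managing the interaction between the parameter $x$ acting on $M$ and on $S$, and ensuring that the long exact sequences in Ext align so that the NAK condition passes correctly to each inductive stage, is where the real care is needed; the Koszul-complex formulation should be what streamlines this, replacing the earlier reliance on the Amplitude Inequality and derived Gorenstein injective dimension.
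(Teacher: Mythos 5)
Your reduction is right: the only new content is $\eqref{110620c8}\implies\eqref{110620c2}$, and the engine must be ``$\Ext iSM=\m\Ext iSM$ plus NAK forces vanishing,'' which is exactly the paper's engine. But the mechanism you propose for producing the equality $\Ext iSM=\m\Ext iSM$ --- descending induction on $\dim_R(M)$, stripping off a parameter $x$ and passing to $M/xM$ via the long exact sequence --- has a genuine gap. To run the induction you would need $\Ext iS{M/xM}$ to satisfy NAK, but the hypothesis~\eqref{110620c8} concerns $M$ only, and NAK is not inherited by the modules appearing in your long exact sequence: $\Ext iS{M/xM}$ is an extension of a submodule of $\Ext{i+1}SM$ by a quotient of $\Ext iSM$, and NAK passes to neither submodules, quotients, nor extensions in general (e.g., if $Q=\m Q\neq0$, then $R\oplus Q$ satisfies NAK while its direct summand $Q$ does not). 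Since these Ext modules are not known to be finitely generated, there is no way to launder the hypothesis through $M/xM$, and the inductive step collapses.

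The paper's actual argument avoids this entirely by inducting downward on the cohomological index rather than on the dimension of the module, and by never forming $M/xM$. One sets $z=\sup\{i\geq0\mid\Ext iSM\neq0\}$ (finite by Lemma~\ref{lem110619c}) and proves the key Lemma~\ref{lem110907a}: if $\Ext iSM=0$ for $i>z$ and $\Ext zSM$ satisfies NAK, then $\Ext zSM=0$. The Koszul complex used is $K^R(\x)$ on a \emph{generating sequence of $\m$} (not a system of parameters for $M$ --- this matters, both because NAK is about $\m N$ and because the quasi-isomorphism $K^R(\x)\to\Otimes{S}{K^R(\x)}$ of Lemma~\ref{lem110619e} requires the Koszul homology to be killed by $\m$). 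Applying $\Otimes{K^R(\x)}{-}$ to $\Hom SJ$, where $J$ is an injective resolution of $M$, Lemma~\ref{lem110619a} identifies the bottom homology with $\Ext zSM/\x\Ext zSM$; then Hom-evaluation, the self-duality $\Hom{K^R(\x)}{R}\cong\shift^{-n}K^R(\x)$, Hom-tensor evaluation, and the quasi-isomorphism above identify $\Otimes{K^R(\x)}{\Hom SJ}$ with $\Otimes{K^R(\x)}{M}$ up to quasi-isomorphism, whose homology in degree $-z<0$ vanishes for degree reasons. That single computation delivers $\Ext zSM=\m\Ext zSM$ directly, with no transfer of NAK to auxiliary modules. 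So while your high-level strategy is the right one, the decisive step --- how flatness and the residue-field hypothesis actually yield the surjectivity of the $\m$-action on the top nonvanishing Ext --- is missing, and the route you sketch for it would not go through as stated.
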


We conclude this introduction by outlining the contents of the paper.
Section~\ref{sec110619a} summarizes foundational material needed for the
proof of Theorem~\ref{thm110619a}.
Section~\ref{sec110619b} is devoted to the proof of this result.
Finally, Section~\ref{sec111121a} consists of an example demonstrating
how large $\Ext iSM$ is, even over a relatively small ring.

\section{Backround Material}\label{sec110619a}

Most of our definitions and notational conventions come 
from~\cite{christensen:gd,foxby:hacr}. For the sake of clarity, we specify
a few items here.

\begin{convention}\label{notn111114a}
We index chain complexes of $R$-modules (``$R$-complexes'' for short)
homologically:
$$Y=\cdots\xra{\partial^Y_{i+1}}Y_i\xra{\partial^Y_{i}}Y_{i-1}\xra{\partial^Y_{i-1}}\cdots.$$
Given an $R$-complex $Y$ and an integer $n$, the \emph{$n$th suspension}
(or \emph{shift}) of $Y$
is denoted $\shift^nY$ and has $(\shift^nY)_i:=Y_{i-n}$.
We set $\shift Y:=\shift^1Y$.

For two $R$-complexes $Y$ and $Z$, our  indexing protocol
extends to the Hom-complex $\Hom YZ$, so that we have
$\Hom YZ_i=\prod_{j\in\bbz}\Hom{Y_j}{Z_{j+i}}$ for each $i\in\bbz$.
As usual, 
the tensor product $\Otimes YZ$ has
$(\Otimes YZ)_i=\prod_{j\in\bbz}\Otimes{Y_j}{Z_{i-j}}$.

Given an element $x\in R$, the \emph{Koszul complex} $K^R(x)$ is
$0\to R\xra xR\to 0$, concentrated in degrees 0 and 1.
If $\x=x_1,\ldots,x_n$ is a sequence in $R$, then the \emph{Koszul complex}
$K^R(\x)$ is defined inductively: $K^R(\x)=\Otimes{K^R(x_1,\ldots,x_{n-1})}{K^R(x_n)}$.

For each morphism of $R$-complexes (i.e., a chain map)
$f\colon Y\to Z$, the mapping cone $\cone(f)$ gives rise to a 
short exact sequence
$0\to Z\to\cone(f)\to \shift Y\to 0$, hence an associated long exact sequence
on homology. Given an element $x\in R$, 
the mapping cone associated to the morphism $Y\xra{x}Y$ is isomorphic to 
$\Otimes{K^R(x)}{Y}$.
A morphism $f\colon Y\to Z$ is a \emph{quasiisomorphism} if the induced map
$\HH_i(f)\colon\HH_i(Y)\to \HH_i(Z)$ on homology is an isomorphism for each $i\in\bbz$.
\end{convention}

\begin{lem}\label{lem110619a}
Let $i_0$ be a fixed integer, let $\textbf{x}=x_1,\ldots,x_n$ be a sequence in $R$, and let $Y$ be an $R$-complex such that $\HH_{i}(Y)=0$ for all $i<i_0$. Then $\HH_i(\Otimes{K^{R}(\textbf{x})}{Y})=0$ for all $i<i_0$ and $\HH_{i_0}(\Otimes{K^{R}(\textbf{x})}{Y})\cong\HH_{i_0}(Y)/(\textbf{x})\HH_{i_0}(Y)$.
\end{lem}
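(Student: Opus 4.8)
The plan is to induct on the length $n$ of the sequence $\x = x_1,\ldots,x_n$, with the single-element case carrying the real content and the inductive step reducing the general Koszul complex to an iterated mapping cone via the defining isomorphism $K^R(\x) = \Otimes{K^R(x_1,\ldots,x_{n-1})}{K^R(x_n)}$.

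For the base case $n=1$, write $x = x_1$. Since $K^R(x)$ is the complex $0\to R\xra{x}R\to 0$, the Convention identifies $\Otimes{K^R(x)}{Y}$ with the mapping cone of the morphism $x\colon Y\to Y$, yielding the short exact sequence of complexes
$$0\to Y\to \Otimes{K^R(x)}{Y}\to\shift Y\to 0.$$
I would then pass to the associated long exact sequence in homology, using $\HH_i(\shift Y)\cong\HH_{i-1}(Y)$ and the standard fact that the connecting homomorphism $\HH_{i-1}(Y)\to\HH_{i-1}(Y)$ is, up to sign, multiplication by $x$. Extracting kernels and cokernels along this long exact sequence produces, for every $i$, a short exact sequence
$$0\to \HH_i(Y)/x\HH_i(Y)\to\HH_i(\Otimes{K^R(x)}{Y})\to (0:_{\HH_{i-1}(Y)}x)\to 0.$$
The hypothesis $\HH_j(Y)=0$ for $j<i_0$ then finishes the base case: for $i<i_0$ both outer terms vanish (note $i-1<i_0$ as well), giving $\HH_i(\Otimes{K^R(x)}{Y})=0$; and for $i=i_0$ the right-hand term vanishes because $\HH_{i_0-1}(Y)=0$, leaving the desired isomorphism $\HH_{i_0}(\Otimes{K^R(x)}{Y})\cong\HH_{i_0}(Y)/x\HH_{i_0}(Y)$.

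For the inductive step, set $Y'=\Otimes{K^R(x_1,\ldots,x_{n-1})}{Y}$. By the inductive hypothesis applied to $x_1,\ldots,x_{n-1}$, one has $\HH_i(Y')=0$ for $i<i_0$ and $\HH_{i_0}(Y')\cong\HH_{i_0}(Y)/(x_1,\ldots,x_{n-1})\HH_{i_0}(Y)$. Associativity and commutativity of the tensor product give $\Otimes{K^R(\x)}{Y}\cong\Otimes{K^R(x_n)}{Y'}$, so applying the already-established case $n=1$ to $x_n$ and $Y'$ yields vanishing below $i_0$ together with $\HH_{i_0}(\Otimes{K^R(\x)}{Y})\cong\HH_{i_0}(Y')/x_n\HH_{i_0}(Y')$. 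Substituting the description of $\HH_{i_0}(Y')$ and observing that killing $x_n$ in the quotient $\HH_{i_0}(Y)/(x_1,\ldots,x_{n-1})\HH_{i_0}(Y)$ is the same as passing to $\HH_{i_0}(Y)/(\x)\HH_{i_0}(Y)$ completes the induction.

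I expect the only delicate point to be the base case: verifying that the connecting map in the mapping-cone long exact sequence is multiplication by $x$ (so that its cokernel and kernel are exactly $\HH_i(Y)/x\HH_i(Y)$ and $(0:x)$), and keeping the suspension bookkeeping straight. The inductive step is then essentially formal, the one thing worth checking being that the tensor-associativity isomorphism $\Otimes{K^R(\x)}{Y}\cong\Otimes{K^R(x_n)}{Y'}$ respects the grading as expected.
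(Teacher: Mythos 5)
Your proposal is correct and follows essentially the same route as the paper, which argues by induction on $n$ using the long exact sequence in homology associated to the mapping-cone short exact sequence $0\to Y\to \Otimes{K^R(x_1)}{Y}\to\shift Y\to 0$; you simply fill in the details (identification of the connecting map with multiplication by $x$, and the resulting kernel--cokernel short exact sequences) that the paper leaves to the reader. The only cosmetic difference is that you peel off $x_n$ rather than $x_1$ in the inductive step, which is immaterial.
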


\begin{proof}
Argue by induction on $n$, using the long exact sequence 
in homology associated to the short
exact sequence $0\to Y\to \Otimes{K^R(x_1)}{Y}\to\shift Y\to 0$.
\end{proof}

\begin{lem}\label{lem110619e}
Let $\vf\colon R\to S$ be a flat local ring homomorphism such that the induced map
$R/\m\to S/\m S$ is an isomorphism.
Let $\x=x_1,\ldots,x_n\in R$ be a generating sequence for $\m$.
\begin{enumerate}[\rm(a)]
\item \label{lem110619e1}
The natural map $K^{R}(\textbf{x})\to\Otimes{S}{K^{R}(\textbf{x})}$ is a quasi-isomorphism. 
\item \label{lem110619e2}
For each bounded above complex $J$ of injective $R$-modules,
the induced map $\Hom{\Otimes{S}{K^{R}(\textbf{x})}}{J}\to\Hom{K^{R}(\textbf{x})}{J}$ is a quasi-isomorphism.
\end{enumerate}
\end{lem}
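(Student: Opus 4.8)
The plan is to establish the two parts in order, deducing part~\eqref{lem110619e2} from part~\eqref{lem110619e1} via mapping cones.

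For part~\eqref{lem110619e1}, write $K:=K^R(\x)$ and let $f\colon K\to\Otimes{S}{K}$ denote the natural map induced by $\vf$. First I would use flatness of $\vf$ to obtain the natural isomorphism $\HH_i(\Otimes{S}{K})\cong\Otimes{S}{\HH_i(K)}$ for every $i$; here $\Otimes{S}{K}$ is just the Koszul complex over $S$ on the sequence $\vf(\x)$. The second ingredient is the standard fact that the Koszul homology $\HH_i(K)$ is annihilated by the ideal $(\x)=\m$, so each $\HH_i(K)$ is a vector space over $k=R/\m$. Since $\m$ kills $\HH_i(K)$, there is a natural isomorphism $\Otimes{S}{\HH_i(K)}\cong\Otimes[k]{(S/\m S)}{\HH_i(K)}$, and the hypothesis that $R/\m\to S/\m S$ is an isomorphism gives $S/\m S\cong k$, so this is in turn isomorphic to $\HH_i(K)$. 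Chasing the maps shows that $\HH_i(f)$ is precisely this isomorphism in each degree, whence $f$ is a quasi-isomorphism.

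For part~\eqref{lem110619e2}, I would reduce the claim about $\Hom{f}{J}$ to an acyclicity statement. Setting $C:=\cone(f)$, part~\eqref{lem110619e1} makes $C$ a bounded acyclic complex of $R$-modules, and because $\Hom{-}{J}$ is additive and contravariant, $\Hom{C}{J}$ is, up to a shift, the mapping cone of $\Hom{f}{J}$; thus $\Hom{f}{J}$ is a quasi-isomorphism if and only if $\Hom{C}{J}$ is acyclic. To prove the latter I would exploit the boundedness of $C$: since $C_j=0$ outside a finite range, each homology module $\HH_n(\Hom{C}{J})$ involves only finitely many of the injectives $J_i$, so for the purpose of computing $\HH_n$ one may replace $J$ by a bounded complex of injective modules. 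For a bounded complex of injectives I would induct on the number of nonzero terms using the brutal-truncation short exact sequence and the resulting long exact sequence in homology; the base case is a single injective module $E$ in one degree, where exactness of $\Hom{-}{E}$ carries the acyclic complex $C$ to an acyclic complex. This yields acyclicity of $\Hom{C}{J}$ and hence part~\eqref{lem110619e2}. (Conceptually, this records that a complex of injectives which is bounded above, being bounded below in cohomological indexing, is homotopically injective.)

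The main obstacle is part~\eqref{lem110619e2}: applying $\Hom{-}{J}$ to a quasi-isomorphism need not preserve quasi-isomorphisms, and since $J$ may be unbounded below I cannot directly invoke term-by-term exactness of $\Hom{-}{E}$. The decisive point is the reduction to a bounded complex of injectives, which works only because $C$, equivalently $K^R(\x)$, is bounded; this is exactly where the finiteness inherent in Koszul complexes is essential. Part~\eqref{lem110619e1}, by contrast, is routine once flatness and the annihilation of Koszul homology by $(\x)$ are in hand.
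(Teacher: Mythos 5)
Your proof is correct and follows the same route as the paper, which simply cites part (a) from Frankild--Sather-Wagstaff--Wiegand (whose argument is the flatness-plus-Koszul-homology computation you give) and derives part (b) from (a) by the ``standard property'' of bounded-above complexes of injectives that you prove via truncation. Your write-up merely supplies the details the paper outsources, including the correct observation that boundedness of the Koszul complex is what makes the reduction to a bounded complex of injectives work.
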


\begin{proof}
Part~\eqref{lem110619e1} is from~\cite[2.3]{frankild:amsveem},
and part~\eqref{lem110619e2} follows from this by a standard property of 
bounded above complexes of injective $R$-modules.
\end{proof}

The next  result is  from~\cite[proof of  Theorem 2.5]{frankild:amsveem}.

\begin{lem} \label{lem110619c}
Let $\vf\colon R\to S$ be a flat  ring homomorphism, and let $M$ be an $R$-module. 
Then $\Ext{i}{S}{M}=0$ for all $i>\dim(R/\ann_R(M))$.
In particular, if $M$ is finitely generated, then $\Ext{i}{S}{M}=0$ for all $i>\dim_{R}(M)$.
\end{lem}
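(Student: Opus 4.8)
The plan is to reduce the computation to the quotient ring $\ol R := R/\ann_R(M)$, where the relevant Krull dimension is exactly $d := \dim(R/\ann_R(M))$, and then invoke the classical bound on the projective dimension of a flat module. Set $a := \ann_R(M)$ and $\ol S := \Otimes{\ol R}{S} = S/aS$. The crux of the argument is the natural isomorphism
\[
\Ext{i}{S}{M} \cong \Ext[\ol R]{i}{\ol S}{M} \qquad \text{for all } i,
\]
which moves the computation to a ring whose dimension is $d$ rather than $\dim R$, combined with the fact that $\ol S$ is a flat $\ol R$-module.

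To establish the isomorphism I would fix a projective resolution $P_\bullet \res S$ over $R$, so that the modules $\Ext{i}{S}{M}$ are the cohomology modules of $\Hom{P_\bullet}{M}$. Since $a$ annihilates $M$, it is naturally an $\ol R$-module, and Hom-tensor adjunction gives an isomorphism of complexes $\Hom{P_\bullet}{M} \cong \Hom[\ol R]{\Otimes{\ol R}{P_\bullet}}{M}$. The key point is that flatness of $S$ over $R$ yields $\Tor{q}{\ol R}{S} = 0$ for $q > 0$, so the complex $\Otimes{\ol R}{P_\bullet}$, which consists of projective $\ol R$-modules, is acyclic in positive degrees with $\HH_0 = \ol S$; that is, it is a projective resolution of $\ol S$ over $\ol R$. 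Hence $\Hom[\ol R]{\Otimes{\ol R}{P_\bullet}}{M}$ computes $\Ext[\ol R]{*}{\ol S}{M}$, giving the displayed isomorphism.

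It then remains to bound $\Ext[\ol R]{i}{\ol S}{M}$. By flat base change, $\ol S = \Otimes{\ol R}{S}$ is flat over $\ol R$, and $\ol R$ is Noetherian of finite Krull dimension $d$. I would invoke the theorem of Gruson--Raynaud and Jensen that a flat module over a commutative Noetherian ring of Krull dimension $d$ has projective dimension at most $d$; thus $\pd_{\ol R}(\ol S) \le d$ and therefore $\Ext[\ol R]{i}{\ol S}{M} = 0$ for all $i > d$. Combined with the isomorphism above, this gives $\Ext{i}{S}{M} = 0$ for $i > \dim(R/\ann_R(M))$. The final assertion is immediate, since $\dim_R(M) = \dim(R/\ann_R(M))$ when $M$ is finitely generated.

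The adjunction and the identification of $\Otimes{\ol R}{P_\bullet}$ as a resolution are routine; the one genuinely external input, and the step I regard as the main obstacle, is the projective-dimension bound for flat modules over a finite-dimensional Noetherian ring. Everything hinges on it delivering the sharp bound $d$ rather than the cruder $\dim R$, which is precisely why the reduction to $\ol R$ is indispensable: flatness of $S$ over $R$ alone only gives $\pd_R(S) \le \dim R$, whereas passing to $\ol R$ shrinks the ambient dimension to the coheight of $\supp(M)$.
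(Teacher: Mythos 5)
Your proposal is correct and is essentially the argument behind the paper's citation: the paper gives no in-text proof but refers to the proof of Theorem 2.5 in Frankild--Sather-Wagstaff--Wiegand, which likewise reduces modulo $\ann_R(M)$ (using that $S/\ann_R(M)S$ is flat over $R/\ann_R(M)$) and then applies the Raynaud--Gruson/Jensen bound $\pd\le\dim$ for flat modules over a finite-dimensional noetherian ring. All the intermediate steps you flag as routine (the adjunction isomorphism of complexes and the identification of $\Otimes{(R/\ann_R(M))}{P_\bullet}$ as a projective resolution, which uses flatness of $S$) do check out, so there is nothing to repair.
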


\section{Proof of Theorem~\ref{thm110703a}} \label{sec110619b}

Let $\vf\colon R\to S$ be a local ring homomorphism.
The term ``satisfies NAK'' is defined in~\ref{defn 110211}.
Given an $S$-module $N$, 
if $N$ satisfies NAK as an $S$-module, then it satisfies NAK as an $R$-module
because of the epimorphism $N/\m N=N/\m SN\onto N/\n N$.
Furthermore, this reasoning shows that the converse holds if $\m S=\n$,
e.g., if the induced map $R/\m \to S/\m S$ is an isomorphism.

\begin{lem}\label{lem110907a}
Let $\vf\colon R\to S$ be a flat local ring homomorphism  such that the induced map
$R/\m\to S/\m S$ is an isomorphism. Let $M$ be an $R$-module, and let $z\geq 1$. If $\Ext{i}{S}{M}=0$ for all $i>z$ and $\Ext{z}{S}{M}$ satisfies NAK then $\Ext{z}{S}{M}=0$.
\end{lem}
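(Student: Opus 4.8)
The plan is to read $\Ext{z}{S}{M}$ as a homology module of a Hom-complex, to extract its NAK-quotient $\Ext{z}{S}{M}/\m\Ext{z}{S}{M}$ using the Koszul machinery of Lemmas~\ref{lem110619a} and~\ref{lem110619e}, and to show this quotient is zero; then Definition~\ref{defn 110211} forces $\Ext{z}{S}{M}=0$. To begin, fix an injective resolution $M\res J$, so $J$ is a complex of injective $R$-modules concentrated in non-positive degrees, and set $W:=\Hom{S}{J}$. Then $\HH_{-i}(W)\cong\Ext{i}{S}{M}$ for every $i$, so the hypothesis $\Ext{i}{S}{M}=0$ for $i>z$ says precisely that $\HH_j(W)=0$ for all $j<-z$, while $\HH_{-z}(W)\cong\Ext{z}{S}{M}$.

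Next I would fix a generating sequence $\x=x_1,\dots,x_n$ for $\m$ and apply Lemma~\ref{lem110619e}\eqref{lem110619e2} (this is where the hypothesis that $R/\m\to S/\m S$ is an isomorphism enters), obtaining a quasi-isomorphism $\Hom{\Otimes{S}{K^R(\x)}}{J}\res\Hom{K^R(\x)}{J}$. The idea is to compare the two sides in the single homological degree $-z-n$. On the left, Hom-tensor adjunction gives $\Hom{\Otimes{S}{K^R(\x)}}{J}\cong\Hom{K^R(\x)}{W}$, and the self-duality $\Hom{K^R(\x)}{R}\cong\shift^{-n}K^R(\x)$ of the Koszul complex (a bounded complex of finitely generated free modules) identifies this with $\shift^{-n}(\Otimes{K^R(\x)}{W})$. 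Thus the degree-$(-z-n)$ homology of the left-hand side is $\HH_{-z}(\Otimes{K^R(\x)}{W})$, and Lemma~\ref{lem110619a} applied with $i_0=-z$ (legitimate because $\HH_j(W)=0$ for $j<-z$) computes this as $\HH_{-z}(W)/(\x)\HH_{-z}(W)\cong\Ext{z}{S}{M}/\m\Ext{z}{S}{M}$, using $(\x)=\m$.

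On the right, the same self-duality gives $\Hom{K^R(\x)}{J}\cong\shift^{-n}(\Otimes{K^R(\x)}{J})$; since $K^R(\x)$ is a bounded complex of flat modules and $M\res J$, tensoring preserves this quasi-isomorphism, so the right-hand side is quasi-isomorphic to $\shift^{-n}(\Otimes{K^R(\x)}{M})$. Its degree-$(-z-n)$ homology is $\HH_{-z}(\Otimes{K^R(\x)}{M})$, which vanishes because the Koszul complex $\Otimes{K^R(\x)}{M}$ is concentrated in non-negative degrees and $z\geq1$. Passing both computations through the quasi-isomorphism of Lemma~\ref{lem110619e}\eqref{lem110619e2} therefore yields $\Ext{z}{S}{M}/\m\Ext{z}{S}{M}=0$, and since $\Ext{z}{S}{M}$ satisfies NAK, Definition~\ref{defn 110211} gives $\Ext{z}{S}{M}=0$.

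The only delicate point I anticipate is the degree bookkeeping: one must carry the suspension $\shift^{-n}$ coming from Koszul self-duality through both sides so that the degree in which the NAK-quotient surfaces on the left is exactly the degree in which the Koszul homology of $M$ on the right is forced to vanish by $z\geq1$. Once the suspensions are aligned, the proof is a direct combination of adjunction with Lemmas~\ref{lem110619a} and~\ref{lem110619e}.
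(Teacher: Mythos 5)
Your proposal is correct and follows essentially the same route as the paper's proof: both use Lemma~\ref{lem110619a} to realize $\Ext zSM/\m\Ext zSM$ as a Koszul homology module of $\Hom SJ$, then combine Koszul self-duality, Hom-tensor adjunction/evaluation, and the quasi-isomorphism of Lemma~\ref{lem110619e}\eqref{lem110619e2} to identify it with $\HH_{-z}(\Otimes{K^R(\x)}{M})=0$, and finish with NAK. The only cosmetic difference is that you compare homology on the two sides of the quasi-isomorphism in a fixed degree, whereas the paper strings the identifications into a single chain of isomorphisms ending at $\Otimes{K^R(\x)}{J}$; the degree bookkeeping you flag works out exactly as you describe.
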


\begin{proof}
Let $\x=x_1,\ldots,x_n\in R$ be a generating sequence for $\m$,
and let $J$ be an $R$-injective resolution of $M$. By assumption we have 
$\HH_{-i}(\Hom{S}{J})\cong\Ext{i}{S}{M}=0$ 
for all $i>z$, so  we have
$\HH_{-i}(\Otimes{K^R(\mathbf{x})}{\Hom{S}{J}})=0$
for all $i>z$ and 
\begin{align*}
\HH_{-z}(\Otimes{K^R(\mathbf{x})}{\Hom{S}{J}}) & \cong  \HH_{-z}(\Hom{S}{J})/\mathbf{x}\HH_{-z}(\Hom{S}{J})\\
& \cong  \Ext{z}{S}{M}/\mathbf{x}\Ext{z}{S}{M}
\end{align*}
by Lemma~\ref{lem110619a}.

We claim that $\Ext{z}{S}{M}/\mathbf{x}\Ext{z}{S}{M}=0$. To see this we first notice that 
\begin{align*}
\Otimes{K^R(\mathbf{x})}{\Hom{S}{J}} &\cong \Hom{\Hom{K^R(\mathbf{x})}{R}}{\Hom{S}{J}}\\
&\cong \Hom{\shift^{-n}K^R(\mathbf{x})}{\Hom{S}{J}}\\
&\cong\shift^{n}\Hom{K^R(\mathbf{x})}{\Hom{S}{J}}\\
&\cong\shift^{n}\Hom{\Otimes{S}{K^R(\mathbf{x})}}{J}\\
&\simeq\shift^{n}\Hom{K^R(\mathbf{x})}{J}\\
&\cong\Hom{\shift^{-n}K^R(\mathbf{x})}{J}\\
&\cong\Hom{\Hom{K^R(\mathbf{x})}{R}}{J}\\
&\cong\Otimes{K^R(\mathbf{x})}{J}.
\end{align*}
The first and last isomorphisms are Hom-evaluation; see, e.g., 
\cite[B.2. Lemma]{avramov:edcrcvct}. 
The fourth isomorphism is Hom-tensor 
evaluation, and the other isomorphisms follow from properties of the Koszul complex.
The quasiisomorphism in the fifth step is from Lemma~\ref{lem110619e}\eqref{lem110619e2}. 

This sequence explains the second isomorphism in the next display:
\begin{align*}
\Ext{z}{S}{M}/\mathbf{x}\Ext{z}{S}{M}
&\cong \HH_{-z}(\Otimes{K^R(\mathbf{x})}{\Hom{S}{J}})\\
&\cong\HH_{-z}(\Otimes{K^R(\mathbf{x})}{J})
\\
&\cong\HH_{-z}(\Otimes{K^R(\mathbf{x})}{M})\\
&=0.
\end{align*}
The first isomorphism comes from the first paragraph of this proof.
The second isomorphism follows from the fact that
the quasiisomorphism $M\xra\simeq J$ is respected by 
$\Otimes{K^R(\mathbf{x})}-$.
And the vanishing is due to the assumption $z\geq 1$.

As $\Ext{z}{S}{M}$ satisfies NAK, the claim implies $\Ext{z}{S}{M}=0$ as desired. 
\end{proof}

Here is Theorem~\ref{thm110703a} from the introduction.

\begin{thm}\label{thm110619a}
Let $\vf\colon R\to S$ be a flat local ring homomorphism such that the induced map $R/\mathfrak{m}\to S/\mathfrak{m}S$ is an isomorphism, and let $M$ be a finitely generated $R$-module. If $\Ext{i}{S}{M}$ satisfies NAK for $i=1,\ldots,\dim_{R}(M)$ then $\Ext{i}{S}{M}=0$ for all $i\neq0$ and $M$ has an $S$-module structure compatible with its $R$-module structure via $\vf$.
\end{thm}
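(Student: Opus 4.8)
The plan is to deduce the vanishing $\Ext{i}{S}{M}=0$ for all $i\geq 1$ by a descending induction that repeatedly feeds Lemma~\ref{lem110907a}, and then to invoke the implication $\eqref{110620c2}\implies\eqref{110620c1}$ from Theorem~\ref{110620c} to produce the compatible $S$-module structure. In this way all of the genuine homological work is packaged into Lemma~\ref{lem110907a}, and the theorem itself reduces to a short bookkeeping argument.

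Concretely, I would set $d:=\dim_R(M)$. Lemma~\ref{lem110619c} gives $\Ext{i}{S}{M}=0$ for every $i>d$, which both settles the indices above $d$ and supplies the input needed to start the induction. I would then run $z$ downward through $d,d-1,\ldots,1$: at stage $z$ the vanishing $\Ext{i}{S}{M}=0$ for all $i>z$ is already in hand (from Lemma~\ref{lem110619c} together with the previously completed stages), while $\Ext{z}{S}{M}$ satisfies NAK by hypothesis, so Lemma~\ref{lem110907a} yields $\Ext{z}{S}{M}=0$. After the final stage this gives $\Ext{i}{S}{M}=0$ for all $i\geq 1$; since $\Ext{i}{S}{M}=0$ trivially for $i<0$, we obtain $\Ext{i}{S}{M}=0$ for all $i\neq 0$, which is the first assertion. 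When $d=0$ the NAK hypothesis is vacuous and the induction is empty, but Lemma~\ref{lem110619c} already delivers the conclusion for $i\geq 1$, so this edge case needs no separate argument.

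For the second assertion, the vanishing just established is precisely condition~\eqref{110620c2} of Theorem~\ref{110620c}, so the equivalence of conditions~\eqref{110620c1}--\eqref{110620c6} there furnishes the desired $S$-module structure on $M$ compatible with its $R$-module structure via $\vf$. I do not expect a genuine obstacle at this level: the only points that require care are verifying that the hypotheses of Lemma~\ref{lem110907a} hold at every stage of the induction---in particular that the boundary index $i=d$ is covered by Lemma~\ref{lem110619c} rather than by the NAK hypothesis---and confirming that condition~\eqref{110620c2} by itself suffices to trigger Theorem~\ref{110620c}. The substantive difficulty of the whole argument lives inside Lemma~\ref{lem110907a}, where the Koszul-complex and Hom-evaluation manipulations built on Lemma~\ref{lem110619e} do the real work.
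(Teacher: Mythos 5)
Your proposal is correct and takes essentially the same route as the paper: both arguments reduce everything to Lemma~\ref{lem110619c} (vanishing above $\dim_R(M)$), Lemma~\ref{lem110907a} (killing the top nonvanishing Ext via NAK), and the implication \eqref{110620c2}$\implies$\eqref{110620c1} of Theorem~\ref{110620c}. The only difference is cosmetic: the paper applies Lemma~\ref{lem110907a} once to $z=\sup\{i\ge 0\mid \Ext iSM\neq 0\}$ and derives a contradiction from $z\ge 1$, whereas you run a descending induction from $d$ down to $1$; both are valid.
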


\begin{proof}
Set $z=\sup\{i\geq 0\mid\Ext iSM\neq 0\}$.\footnote{We take the supremum here instead of the
maximum since we do not know \emph{a priori} whether the set $\{i\geq 0\mid\Ext iSM\neq 0\}$
is non-empty.}
Lemma~\ref{lem110619c} implies that $z\leq\dim_R(M)$. 
Note that $z\leq 0$: if $z\geq 1$, then Lemma~\ref{lem110907a} implies that
$\Ext zSM=0$, a contradiction.
It follows that $\Ext{i}{S}{M}=0$ for all $i\neq0$, and the remaining conclusions follow from
Theorem~\ref{110620c}.
\end{proof}

\begin{disc}\label{disc111116b}
As we note in the introduction, our proof of this theorem removes the need to invoke
the Amplitude Inequality in the proof of Theorem~\ref{110620c}. Indeed, the 
Amplitude Inequality is used in the implication~\eqref{110620c3}$\implies$\eqref{110620c2},
which we prove directly in the proof of Theorem~\ref{thm110619a}.
\end{disc}

\begin{disc}\label{disc111123a}
One can paraphrase Theorem~\ref{thm110619a} as follows:
In Theorem~\ref{110620c}\eqref{110620c3} one can replace 
the phrase ``is finitely generated over $R$'' 
with the phrase ``satisfies NAK''. It is natural to ask whether the same replacement
can be done in Theorem~\ref{110620c}\eqref{110620c6}.
In fact, this cannot be done because, given a finitely generated $R$-module
$M$, the $S$-module $\Otimes SM$ is finitely generated, so it automatically
satisfies NAK, regardless of whether $M$ has a compatible $S$-module structure.
\end{disc}

\section{Explicit Computations} \label{sec111121a}

Given a ring homomomorphism $\vf\colon R\to S$ as in Theorem~\ref{thm110619a},
if $M$ is a finitely generated $R$-module that does \emph{not} have a compatible
$S$-module structure, then we know that $\Ext iSM$ does not satisfy NAK for some $i$.
Hence, this Ext-module is quite large. 
This section is devoted to a computation showing
how large this Ext-module is when 
$R\neq \comp R=S$, even for the simplest ring $R$, e.g., for
$R=k[X]_{(X)}$ where $k$ is a field or for $\bbz_{p\bbz}$.
See Remark~\ref{disc111130a}.

\begin{lem}\label{lem110601a}
Let $\vf\colon R\to S$ be a faithfully flat ring homomorphism, and let $C$ be an $R$-module.
Let $\m$ be a maximal ideal of $R$, and assume that $C$ is $\m$-adically complete.
\begin{enumerate}[\quad\rm(a)]
\item \label{lem110601a1} 
Then $\Ext i{S/R}{C}=0=\Ext iSC$ for all $i\geq 1$.
\item \label{lem110601a2} 
If $R$ is local and the natural map $R/\m\to S/\m S$ is an isomorphism, then $\Hom{S/R}{C}=0$,
and $C$ has an $S$-module structure compatible with its $R$-module structure via $\vf$,
and the natural  maps $C\to \Hom SC\to C$ are inverse isomorphisms.
\item \label{lem110601a3} 
If $R$ is local, then $\Ext{i}{\comp R/R}{\comp R}=0=\Ext{i+1}{\comp R}{\comp R}$ for all $i\geq 0$, and the natural  maps
$\comp R\to\Hom{\comp R}{\comp R}\to\comp R$ are inverse isomorphisms.
\end{enumerate}
\end{lem}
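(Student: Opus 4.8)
The plan is to build everything on the short exact sequence of $R$-modules $0\to R\xra{\vf}S\to S/R\to 0$, which is available because a faithfully flat homomorphism is injective (if $a\in\ker\vf$ then $\Otimes{S}{(aR)}=\vf(a)S=0$, so $aR=0$ by faithful flatness). For part~\eqref{lem110601a1} I would first show that $S/R$ is itself a flat $R$-module. Applying $\Otimes{-}{N}$ to the sequence, flatness of $S$ and $R$ gives $\Tor{i}{S/R}{N}=0$ for $i\geq 2$, and for $i=1$ it identifies $\Tor{1}{S/R}{N}$ with the kernel of the map $N\to\Otimes{S}{N}$, $n\mapsto 1\otimes n$, which is injective precisely because $\vf$ is \emph{faithfully} flat; hence $S/R$ is flat. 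With both $S$ and $S/R$ flat over $R$, Theorem~\ref{thm110620a} applied to the $\m$-adically complete module $C$ yields $\Ext{i}{S}{C}=0=\Ext{i}{S/R}{C}$ for all $i\geq 1$. (Theorem~\ref{thm110620a} is phrased for a local ring; if $R$ is not local I would first localize at $\m$, noting that $C$ is naturally an $R_\m$-module and $\Hom{F}{C}\cong\Hom[R_\m]{\Otimes[R_\m]{R_\m}{F}}{C}$, so the vanishing can be read off over the local ring $R_\m$.)

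For part~\eqref{lem110601a2} I would apply $\Hom{-}{C}$ to the same sequence and feed in part~\eqref{lem110601a1} to obtain the exact sequence $0\to\Hom{S/R}{C}\to\Hom{S}{C}\xra{\rho}C\to 0$, where $\rho$ is evaluation at $1_S$ under the identification $\Hom{R}{C}\cong C$. The crux is the vanishing $\Hom{S/R}{C}=0$. The hypothesis that $R/\m\to S/\m S$ is an isomorphism says $S=\vf(R)+\m S$, which translates into $\m(S/R)=S/R$, hence $\m^n(S/R)=S/R$ for all $n$. Any $R$-linear $f\colon S/R\to C$ therefore satisfies $f(S/R)=f(\m^n(S/R))=\m^n f(S/R)\subseteq\m^n C$ for every $n$, so $f(S/R)\subseteq\bigcap_n\m^n C=0$ because $C$, being $\m$-adically complete, is $\m$-adically separated. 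Thus $\Hom{S/R}{C}=0$ and $\rho$ is an isomorphism of $R$-modules.

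Still in part~\eqref{lem110601a2}, I would produce the $S$-structure by transporting, along $\rho$, the natural $S$-action on $\Hom{S}{C}$ coming from the $S$-action on the source $S$ (i.e.\ $(s\cdot f)(t)=f(ts)$). A direct check using $R$-linearity shows $\vf(r)$ acts on $\Hom{S}{C}$ exactly as $r$ does, so the transported $S$-structure on $C$ is compatible with its $R$-structure via $\vf$. The homothety $\lambda\colon C\to\Hom{S}{C}$, $c\mapsto(s\mapsto s\cdot c)$, is then $R$-linear and satisfies $\rho\circ\lambda=\mathrm{id}_C$; since $\rho$ is bijective, $\lambda=\rho^{-1}$, so the natural maps $C\to\Hom{S}{C}\to C$ are inverse isomorphisms.

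Finally, part~\eqref{lem110601a3} is the special case $S=C=\comp R$: the completion map $R\to\comp R$ is faithfully flat, $\comp R$ is $\m$-adically complete, and $R/\m\to\comp R/\m\comp R$ is an isomorphism, so parts~\eqref{lem110601a1} and~\eqref{lem110601a2} apply verbatim. They give $\Ext{j}{\comp R}{\comp R}=0$ for $j\geq 1$ (the assertion about $\Ext{i+1}{\comp R}{\comp R}$) and $\Ext{i}{\comp R/R}{\comp R}=0$ for $i\geq 1$, while $\Hom{\comp R/R}{\comp R}=0$ from part~\eqref{lem110601a2} supplies the missing $i=0$ case; the inverse-isomorphism statement is immediate from part~\eqref{lem110601a2}. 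I expect the main obstacle to be the heart of part~\eqref{lem110601a2}: correctly converting the hypothesis $R/\m\cong S/\m S$ into the equality $\m(S/R)=S/R$ and combining it with separatedness to kill $\Hom{S/R}{C}$, and then keeping track of the transported $S$-structure carefully enough to verify both compatibility with $\vf$ and that the two natural maps are mutual inverses.
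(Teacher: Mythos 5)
Your proposal is correct. Parts~(a) and~(c) follow the paper's argument essentially verbatim: the paper quotes Matsumura for the fact that purity of the faithfully flat map $\vf$ forces $S/R$ to be flat, whereas you derive the same fact from the Tor long exact sequence of $0\to R\to S\to S/R\to 0$, but the content is identical, and both proofs then invoke Theorem~\ref{thm110620a}. Part~(b), however, is argued by a genuinely different and more elementary route. The paper first proves that the induced map $\comp R\to\comp S$ of $\m$-adic completions is an isomorphism (Nakayama plus faithful flatness), deduces that $R/\m^n\to S/\m^nS$ is an isomorphism for every $n$, hence $\Otimes{(S/R)}{(R/\m^n)}=0$, and then kills $\Hom{S/R}{C}$ by writing it as the inverse limit $\lim_{n\from}\Hom[R/\m^n]{\Otimes{(R/\m^n)}{(S/R)}}{C/\m^nC}$; the $S$-module structure on $C$ is then obtained by letting $S$ act through $S\to\comp S\cong\comp R$ on the complete module $C$. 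You instead observe that the surjectivity of $R/\m\to S/\m S$ gives $S=\vf(R)+\m S$, hence $\m^n(S/R)=S/R$ for all $n$, so any $R$-linear map $S/R\to C$ has image in $\bigcap_n\m^nC=0$ by separatedness; you then transport the $S$-structure on $\Hom{S}{C}$ across the evaluation isomorphism, checking that $\vf(r)$ and $r$ act identically so that the transported structure is compatible. Your version avoids both the completion isomorphism $\comp R\cong\comp S$ and the inverse-limit manipulation, which makes it shorter and more self-contained; the paper's version buys the concrete identification of the $S$-action as factoring through $\comp R$, which is conceptually transparent and is the mechanism it uses to see compatibility. (One cosmetic slip: in your localization aside for part~(a) you wrote $\Otimes[R_\m]{R_\m}{F}$ where $\Otimes{R_\m}{F}$, i.e.\ $F_\m$, is meant; this does not affect the argument.)
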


\begin{proof}
\eqref{lem110601a1}
The fact that $S$ is faithfully flat over $R$ implies that $\vf$ is a pure monomorphism,
and it follows that $S/R$ is flat over $R$; see~\cite[Theorem 7.5]{matsumura:crt}.
Since $S$ and $S/R$ are flat over $R$, the desired vanishing 
follows from Theorem~\ref{thm110620a}.

\eqref{lem110601a2}
Assume that $R$ is local and the natural map $R/\m\to S/\m S$ is an isomorphism.
In particular, the ideal $\m S\subset S$ is maximal.

Claim: the natural map $\comp\vf\colon\comp R\to\comp S$ between 
$\m$-adic completions is an isomorphism.
To see this, first note that the fact that $\m S$ is maximal implies that
$\comp S$ is local with maximal ideal $\m \comp S$. 
Furthermore, the induced map $\comp R/\m\comp R\to \comp S/\m\comp S$
is equivalent to the isomorphism $R/\m\to S/\m S$, so it is an isomorphism.
It follows from a version of Nakayama's Lemma~\cite[Theorem 8.4]{matsumura:crt}
that $\comp S$ is a cyclic $\comp R$-module. Since it is also faithfully flat,
we deduce that $\comp\vf$
is an isomorphism, as claimed.

For each $n\in\bbn$, the induced map $R/\m^n\to S/\m^nS$ is an 
isomorphism: indeed, this map is equivalent to the 
induced map $\comp R/\m^n\to \comp S/\m^n\comp S$ which is an isomorphism
because $\comp R\xra\cong\comp S$.
This justifies the last step in the following display:
\begin{align*}
\Otimes{(S/R)}{(R/\m^n)}
&\cong(\Otimes{S}{(R/\m^n)})/(\Otimes{R}{(R/\m^n)})
\cong(S/\m^nS)/(R/\m^n)=0.
\end{align*}
This display explains the fifth isomorphism in the next sequence:
\begin{align*}
\Hom{S/R}{C}
&\cong\Hom{S/R}{\lim_{n\from}C/\m^nC}\\
&\cong\lim_{n\from}\Hom{S/R}{C/\m^nC}\\
&\cong\lim_{n\from}\Hom{S/R}{\Hom[R/\m^n]{R/\m^n}{C/\m^nC}}\\
&\cong\lim_{n\from}\Hom[R/\m^n]{\Otimes{(R/\m^n)}{(S/R)}}{C/\m^nC}\\
&\cong\lim_{n\from}\Hom[R/\m^n]{0}{C/\m^nC}\\
&=0.
\end{align*}
Now consider the exact sequence $0\to R\to S\to S/R\to 0$ and part of the long exact sequence in $\Ext{}{-}{C}$.
$$
0\to\underbrace{\Hom{S/R}{C}}_{=0}\to\Hom{S}{C}\to\underbrace{\Hom{R}{C}}_{\cong C}\to\underbrace{\Ext{1}{S/R}{C}}_{=0}
$$
It follows that the induced map $\alpha\colon\Hom SC\to C$ is an isomorphism. 
It is straightforward to show that this is the evaluation map $f\mapsto f(1)$.
Since $C$ is complete, the isomorphism $\comp R\cong \comp S$ implies that $C$ has an $S$-module structure that is compatible with its $R$-module structure via $\vf$.
From this, it follows that the map $\beta\colon C\to\Hom SC$ given by $c\mapsto(s\mapsto sc)$ is a well-defined $S$-module homomorphism.
Since the composition $\alpha\beta$ is the identity on $C$, it follows that $\alpha$ and $\beta$ are inverse isomorphisms.

\eqref{lem110601a3}
This follows from parts~\eqref{lem110601a1} and~\eqref{lem110601a2} using $C=\comp R$.
\end{proof}

\begin{prop}\label{prop110526a}
Assume that  $R$ is a discrete valuation ring that is not complete, with $\m=XR$.
Set  $E=E_R(k)=E_{\comp R}(k)$,
and consider the quotient fields $\K=Q(R)$ and $\KK=Q(\comp R)$.
If $[\KK:\K]=\infty$, 
then there are an uncountable cardinal $C$ and $\comp R$-module isomorphisms
$$\Ext{i}{\comp R}{R}\cong
\begin{cases}
0 & \text{if $i\neq 1$} \\
E\oplus \KK^{(C)} & \text{if $i= 1$}
\end{cases}$$
where $\KK^{(C)}$ is the direct sum of copies of $\KK$ indexed by $C$.
\end{prop}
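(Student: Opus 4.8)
The plan is to run the entire computation through the short exact sequence of $\comp R$-modules
\[
0\to\comp R\to\KK\to E\to 0
\]
(recall $E=E_{\comp R}(k)=\KK/\comp R=\K/R$), so that the $\comp R$-module structure stays visible at every stage. First I would dispose of the degrees $i\neq1$. For $i\geq2$ we get $\Ext{i}{\comp R}{R}=0$ directly from Lemma~\ref{lem110619c} (with $S=\comp R$ and module $R$, since $\dim_R(R)=1$). For $i=0$ I would show $\Hom{\comp R}{R}=0$ by hand: given $R$-linear $f\colon\comp R\to R$, approximating $w\in\comp R$ by $r_n\in R$ with $w\equiv r_n\pmod{X^n\comp R}$ and using $f(X^n\comp R)\subseteq X^nR$ yields $f(w)=f(1)\,w$ in $\comp R$ for all $w$; writing $c=f(1)$ this forces $c\comp R\subseteq R$, and since $[\KK:\K]=\infty$ gives $\comp R\not\subseteq\K$, we get $c=0$, hence $f=0$.

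Next I would apply $\Hom{-}{R}$ to the displayed sequence. As $E$ is torsion, $\KK$ is divisible, and $R$ is torsion-free, all of $\Hom{E}{R},\Hom{\KK}{R},\Hom{\comp R}{R}$ vanish; with the degree-$\geq2$ vanishing this collapses the long exact sequence to a short exact sequence of $\comp R$-modules
\[
0\to\Ext{1}{E}{R}\to\Ext{1}{\KK}{R}\to\Ext{1}{\comp R}{R}\to 0.
\]
I would then identify the two left-hand terms. Writing $E=\varinjlim_n R/X^nR$ and using $\Ext{1}{R/X^nR}{R}\cong R/X^nR$ with the natural surjective transition maps (the $\varprojlim^1$ term vanishes as each $\Hom{R/X^nR}{R}=0$) gives $\Ext{1}{E}{R}\cong\varprojlim_n R/X^nR=\comp R$ as $\comp R$-modules. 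For the middle term, the key observation is that the field $\KK$ acts on itself in the first argument, so $\Ext{1}{\KK}{R}$ is a module over $\KK$, i.e.\ a $\KK$-vector space; thus $\Ext{1}{\KK}{R}\cong\KK^{(C)}$ for some index set $C$.

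With these identifications the sequence reads $0\to\comp R\to\KK^{(C)}\to\Ext{1}{\comp R}{R}\to 0$, where the image of $\comp R$ is $\comp R\,w$ for a nonzero $w$. Splitting off the $\KK$-line through $w$ produces $\KK w/\comp R w\cong\KK/\comp R=E$ and $\KK^{(C)}/\KK w\cong\KK^{(C)}$, hence a short exact sequence of $\comp R$-modules
\[
0\to E\to\Ext{1}{\comp R}{R}\to\KK^{(C)}\to 0.
\]
Since $E$ is an injective $\comp R$-module we have $\Ext[\comp R]{1}{\KK^{(C)}}{E}=0$, so this splits, giving $\Ext{1}{\comp R}{R}\cong E\oplus\KK^{(C)}$.

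Finally I would check that $C$ is uncountable. From $0\to R\to\K\to E\to 0$ and $\Ext{1}{E}{R}\cong\comp R$ one gets $\Ext{1}{\K}{R}\cong\comp R/R$, a nonzero $\K$-vector space (torsion-free and divisible over the discrete valuation ring $R$). A $\K$-basis realizes $\KK\cong\K^{(B)}$ with $|B|=[\KK:\K]$, whence $\Ext{1}{\KK}{R}\cong(\comp R/R)^{B}$ as $R$-modules. Comparing $\K$-dimensions of $\KK^{(C)}$ and $(\comp R/R)^{B}$, and using $(\comp R/R)^{B}\supseteq\K^{B}$ together with the Erd\H{o}s--Kaplansky theorem $\dim_\K\K^{B}=|\K|^{|B|}>|B|$, forces $|C|>[\KK:\K]\geq\aleph_0$. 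The main obstacle is keeping the $\comp R$-module structure (not merely the $R$-structure) under control throughout---resolved by working entirely with the $\comp R$-linear sequence $0\to\comp R\to\KK\to E\to 0$ and with the $\KK$-vector-space structure on $\Ext{1}{\KK}{R}$---together with this last cardinality estimate.
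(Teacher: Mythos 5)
Your proof is correct, but it takes a genuinely different route from the paper's. The paper resolves $R$ injectively via $0\to R\to\K\to E\to 0$ and applies $\Hom{\comp R}{-}$, producing a split exact sequence of injective $\comp R$-modules; it then uses the classification of injective modules over the discrete valuation ring $\comp R$ to write $\Ext{1}{\comp R}{R}\cong E^{(D)}\oplus\KK^{(C)}$, pins down $D=1$ by a socle computation, and proves $C$ uncountable from the separate sequence $0\to R\to\comp R\to\comp R/R\to 0$ via localization at $X$ and the cardinal comparison $K^{(B)}\not\cong K^{(A)}$ for $B>A$. You instead apply $\Hom{-}{R}$ to $0\to\comp R\to\KK\to E\to 0$, identify $\Ext{1}{E}{R}\cong\comp R$ and $\Ext{1}{\KK}{R}\cong\KK^{(C)}$, and read off $E\oplus\KK^{(C)}$ directly from the quotient $\KK^{(C)}/\comp R\,w$; this avoids the structure theory of injectives and the socle count, and your elementary verification that every $R$-linear $f\colon\comp R\to R$ is multiplication by $f(1)$ replaces the paper's citation of \cite[Corollary 1.7]{frankild:amsveem} for $\Hom{\comp R}{R}=0$. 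Your uncountability argument via Erd\H{o}s--Kaplansky applied to $\K^{B}\subseteq(\comp R/R)^{B}\cong\Ext{1}{\KK}{R}$ is the same cardinality phenomenon the paper exploits (its inequality $B>A$ is exactly such a count), packaged more transparently, and it even yields the stronger bound $|C|>[\KK:\K]$. Two small points to tighten: (i) the identification $\Ext{1}{E}{R}\cong\comp R$ must be $\comp R$-linear so that the image in $\Ext{1}{\KK}{R}$ is genuinely a cyclic $\comp R$-module $\comp R\,w$; this is cleanest from the resolution $0\to R\to\K\to E\to 0$, which gives $\Ext{1}{E}{R}\cong\Hom{E}{E}\cong\comp R$ because $\Hom{E}{\K}=0$; (ii) the step $\KK^{(C)}/\KK w\cong\KK^{(C)}$ tacitly assumes $C$ is infinite, which is harmless since one may rename the cardinal and the final count shows it is uncountable in any case.
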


\begin{proof}
As a $K$-vector space and as an $R$-module, we have $\KK\cong K^{(A)}$ for some
infinite  cardinal $A$.
Note that since $R$ and $\comp R$ are discrete valuation rings with uniformizing parameter $X$, we have
$K\cong R_X$ and $\KK\cong \comp R_X\cong\Otimes{\K}{\comp R}$.
Since $K$ has no $X$-torsion, we have $\Hom{k}{K}=0$.

Claim 1: We have $\Ext i{\comp R}{R}=0$ for all $i\neq 1$.
Since $\id_R(R)=1$, it suffices to show that $\Hom{\comp R}{R}=0$.
From~\cite[Corollary 1.7]{frankild:amsveem} we know that
$\Hom{\comp R}{R}$ is isomorphic to a complete submodule  $I\subseteq R$.
Since $R$ is a discrete valuation ring, its non-zero submodules are all isomorphic to $R$,
which is not complete. So we must have $\Hom{\comp R}{R}\cong I=0$.

Claim 2: There is an $R$-module isomorphism $\comp R/ R\cong\KK/K$.
In the following commutative diagram, 
the top row is a minimal $R$-injective resolution of $R$,
and the bottom row is a minimal $\comp R$-injective resolution of $\comp R$:
\begin{equation}\label{eq110526a}
\begin{split}\xymatrix{
0\ar[r]
&R\ar[r]\ar[d]
&\K\ar[r]\ar[d]
&E \ar[r]\ar[d]
&0 \\
0\ar[r]
&\comp R\ar[r]
&\KK\ar[r]
& E\ar[r]
&0.}\end{split}\end{equation}
The Snake Lemma yields an $R$-module isomorphism $\comp R/ R\cong\KK/K$.

Claim 3: We have an $R$-module isomorphism $\Hom{\comp R}{\comp R/R}\cong (\KK/K)^{(B)}$
for some infinite cardinal $B>A$.
This is from the next sequence of $R$-module isomorphisms where the first step is from Claim 2:
\begin{align*}
\Hom{\comp R}{\comp R/R}
&\cong\Hom{\comp R}{\KK/K} \\
&\cong\Hom{\comp R}{\Hom[K]{K}{\KK/K}}\\
&\cong\Hom[K]{\Otimes{K}{\comp R}}{\KK/K} \\
&\cong\Hom[K]{\KK}{\KK/K} \\
&\cong\Hom[K]{K^{(A)}}{\KK/K} \\
&\cong\Hom[K]{K}{\KK/K}^A \\
&\cong(\KK/K)^A \\
&\cong (\KK/K)^{(B)}.
\end{align*}
The third step is by Hom-tensor adjointness,
and the fourth step is from the isomorphism $\KK\cong \Otimes{K}{\comp R}$ noted at the beginning of the proof, and the remaining steps are standard.
Since $A$ is infinite, we must have $B>A$, as claimed.

Claim 4: There is a cardinal $C$ and an $\comp R$-module isomorphism
$\Ext{1}{\comp R}{R}\cong
E\oplus \KK^{(C)}$.
We compute $\Ext{1}{\comp R}{R}$ using the injective resolution of $R$ from the top row of~\eqref{eq110526a}.
From Claim 1, this yields an exact sequence of $\comp R$-module homomorphisms
\begin{equation}\label{eq110526b}
0\to \Hom{\comp R}{K}\to\Hom{\comp R}{E}\to\Ext{1}{\comp R}{R}\to 0.
\end{equation}
Since  $K$ and $E$ are injective over $R$, the modules
$\Hom{\comp R}{K}$ and $\Hom{\comp R}{E}$ are injective over $\comp R$.
Because 
$\Hom{\comp R}{K}$  is injective over $\comp R$, the sequence~\eqref{eq110526b} splits.
As $\Hom{\comp R}{E}$ is injective over $\comp R$, it follows that $\Ext{1}{\comp R}{R}$ is injective over $\comp R$.
So, there is an $\comp R$-module isomorphism
\begin{equation}\label{eq110526c}
\Ext{1}{\comp R}{R}\cong E^{(D)}\oplus\KK^{(C)}
\end{equation}
for  cardinals $C$ and $D$ where $D=\dim_k(\Hom[\comp R]{k}{\Ext{1}{\comp R}{R}})$.
Since the sequence~\eqref{eq110526b} splits, we have the third step in the next sequence:
\begin{align*}
k
&\cong\Hom{k}E\\
&\cong\Hom[\comp R]{k}{\Hom{\comp R}{E}}\\
&\cong\Hom[\comp R]{k}{\Hom{\comp R}{K}}\oplus\Hom[\comp R]{k}{\Ext{1}{\comp R}{R}} \\
&\cong\Hom{k}{K}\oplus\Hom[\comp R]{k}{\Ext{1}{\comp R}{R}} \\
&\cong\Hom[\comp R]{k}{\Ext{1}{\comp R}{R}} 
\end{align*}
The second and fourth steps follow by Hom-tensor adjointness,
and the fifth step follows from the vanishing $\Hom{k}{K}=0$
noted at the beginning of the proof.
It follows that $D=1$, so the claim follows from the isomorphism~\eqref{eq110526c}.

Now we complete the proof of the proposition. Because of Claim 4,  we need only show that 
$C$ is uncountable.
Consider the exact sequence
$$0\to R\to \comp R\to\comp R/R\to 0$$
and part of the associated  long exact sequence induced by $\Ext{}{\comp R}{-}$: 
$$\Hom{\comp R}{R}\to\Hom{\comp R}{\comp R}\to\Hom{\comp R}{\comp R/R}\to
\Ext 1{\comp R}{R}\to\Ext 1{\comp R}{\comp R}.$$
Over $R$, this sequence
has the following form
by Claims 1 and 3 and Lemma~\ref{lem110601a}\eqref{lem110601a3}:
$$0\to\comp R\to(\KK/K)^{(B)}\to
\Ext 1{\comp R}{R}\to0.$$
Apply the  functor $(-)_X$ to obtain the  exact sequence of $\K$-module homomorphisms
$$0\to \KK\to(\KK/K)^{(B)}\to
\Ext 1{\comp R}{R}_X\to0$$
which therefore splits.
Since $E_X=0$, it follows from Claim 4 that over $R$ we have
$$\KK^{(C)}
\cong \Ext 1{\comp R}{R}_X
\cong(\KK/K)^{(B)}/\KK
\cong(K^{(A)})^{(B)}/K^{(A)}
\cong K^{(B)}/K^{(A)}
\cong K^{(B)}.$$
The last two steps in this sequence follow from the fact that $A$ and $B$ are infinite cardinals such that $B>A$.

Suppose that $C$ were countable. It would then follow that $C\leq A$, so we have
$$K^{(B)}
\cong \KK^{(C)}
\cong (K^{(A)})^{(C)}
\cong K^{(A)}.$$
It follows that $B=A$, contradicting the fact that $B>A$. It follows that $C$ is uncountable, 
as desired.
\end{proof}

\begin{disc}\label{disc111130a}
Nagata~\cite[(E3.3)]{nagata:lr} shows that the assumption
$[\KK:\K]=\infty$ in Proposition~\ref{prop110526a} is not automatically satisfied.
On the other hand, the next
result shows that the condition $[\KK:\K]=\infty$ 
is satisfied by the  rings $\bbz_{p\bbz}$ and $R=k[X]_{(X)}$.
\end{disc}

\begin{prop}\label{prop111130a}
Assume that $R$ is a discrete valuation ring 
with $\m=XR$ and such that $|R|=|k|$.
For the quotient fields $\K=Q(R)$ and $\KK=Q(\comp R)$, we have
$[\KK:\K]=\infty$.
\end{prop}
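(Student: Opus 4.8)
The plan is to prove this by a cardinality count, comparing $|\K|$ with $|\KK|$. First I would record that $k$ must be infinite: since $\m=XR\neq 0$ and $\bigcap_n X^nR=0$, the powers $X^n$ with $n\in\bbn$ are pairwise distinct (if $X^n=X^m$ with $n<m$ then $X^n(1-X^{m-n})=0$ with $1-X^{m-n}$ a unit and $X^n\neq 0$), so $R$ is infinite and hence $|k|=|R|$ is infinite. Because $\K=Q(R)=\bigcup_{n\geq 0}X^{-n}R$ is a countable union of copies of $R$ and $R$ is infinite, this gives $|\K|=|R|=|k|$.

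Next I would compute $|\KK|$. Choosing a set $T\subseteq R$ of representatives for $k=R/\m$ with $|T|=|k|$, every element of $\comp R$ has a unique convergent expansion $\sum_{i\geq 0}t_iX^i$ with $t_i\in T$, which gives a bijection $\comp R\leftrightarrow T^{\bbn}$ and hence $|\comp R|=|k|^{\aleph_0}$. Since $\KK=Q(\comp R)=\bigcup_{n\geq 0}X^{-n}\comp R$ and $\comp R$ is infinite, we get $|\KK|=|\comp R|=|k|^{\aleph_0}$.

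Now suppose toward a contradiction that $[\KK:\K]=n<\infty$. Then $\KK\cong\K^{n}$ as a $\K$-vector space, and since $\K$ is infinite this forces $|\KK|=|\K|$, that is, $|k|^{\aleph_0}=|k|$. On the other hand $|k|^{\aleph_0}\geq 2^{\aleph_0}$, so whenever $|k|<2^{\aleph_0}$ we obtain $|k|^{\aleph_0}\geq 2^{\aleph_0}>|k|$, a contradiction; thus $[\KK:\K]=\infty$. This covers the motivating case $R=k[X]_{(X)}$ with $k$ countable, and the same count disposes of $\bbz_{p\bbz}$ directly, since there $\K=\bbq$ is countable while $|\KK|=|\bbq_p|=2^{\aleph_0}$.

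The main obstacle is the regime $|k|\geq 2^{\aleph_0}$, for instance $k=\bbc$ with $R=k[X]_{(X)}$, which still satisfies $|R|=|k|$: here $|k|^{\aleph_0}=|k|$ is possible, so the count cannot distinguish $\K$ from $\KK$ and the argument above fails. In that case the statement is still true but appears to need genuine transcendence information in place of a count: one must show that $\KK/\K$ has positive transcendence degree, equivalently exhibit an element of $\comp R$ transcendental over $\K$. For $R=k[X]_{(X)}$ this is the classical fact that $k((X))$ is transcendental over $k(X)$, which I would obtain by fixing a countably infinite subfield $k_0\subseteq k$, producing $f\in k_0[[X]]$ transcendental over $k_0(X)$ via the count of the third paragraph applied to $k_0$, and transferring transcendence to $k(X)$ using that $k$ and $k_0((X))$ are linearly disjoint over $k_0$ (compare coefficients). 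Carrying out such a transcendence argument uniformly for an \emph{arbitrary} DVR $R$ with $|R|=|k|$, where $\K=Q(R)$ need not be $k(X)$, is the delicate point; the cardinality hypothesis $|R|=|k|$ itself suffices only in the range $|k|<2^{\aleph_0}$.
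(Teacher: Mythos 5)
Your cardinality count is exactly the paper's argument: the paper likewise writes each element of $\comp R$ uniquely as $\sum_{i\geq 0}a_{t_i}X^i$ with the $a_{t_i}$ drawn from a set of representatives of $k$, concludes $|\comp R|=|k|^{\aleph_0}$, and derives a contradiction from $[\KK:\K]=A<\infty$ via $|\K|=A|\K|=|\KK|=|\comp R|$. The one difference is that the paper asserts $|k|^{\aleph_0}>|k|$ with no restriction on $k$, whereas you correctly observe that this inequality can fail: if $|k|=2^{\aleph_0}$ then $|k|^{\aleph_0}=|k|$. So the ``main obstacle'' in your last paragraph is not something you missed that the paper supplies --- it is a gap in the paper's own proof, and indeed in the statement as literally written. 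Concretely, $R=\bbc[[X]]$ is a discrete valuation ring with $\m=XR$ and $|R|=2^{\aleph_0}=|\bbc|=|k|$, yet $\comp R=R$ and $[\KK:\K]=1$. Hence no argument can close the case $|k|^{\aleph_0}=|k|$ without an additional hypothesis (non-completeness alone does not obviously suffice either, since the cardinal count then gives no information).

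Your proof is therefore complete and correct precisely in the regime where the statement admits a proof by counting, namely when $|k|^{\aleph_0}>|k|$; your sufficient condition $|k|<2^{\aleph_0}$ guarantees this, as does $\operatorname{cf}(|k|)=\aleph_0$ by K\"onig's theorem. This covers the paper's motivating examples $\bbz_{p\bbz}$ and $k[X]_{(X)}$ with $k$ countable. For $k[X]_{(X)}$ with $|k|\geq 2^{\aleph_0}$ the conclusion is still true, and your proposed repair via the transcendence of $k((X))$ over $k(X)$ is the right mechanism, but as you note it uses more than the hypothesis $|R|=|k|$ and does not transfer to an arbitrary discrete valuation ring. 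In short: your hesitation is justified, and the proposition should be read with an implicit extra hypothesis (e.g.\ $|k|^{\aleph_0}>|k|$, or a restriction to the rings named in Remark~\ref{disc111130a}).
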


\begin{proof}
We claim that $|\comp R|>|R|$.
To show this, let $\{a_t\}_{t\in k}\subseteq R$ be a set of representatives of the elements of $k$.
Then every element of $\comp R$ has a unique representation 
$\sum_{i=0}^\infty a_{t_i}X^i$. It follows that
$|\comp R|=|k|^{\aleph_0}>|k|=|R|$, as claimed.

Suppose now that $[\KK:\K]=A<\infty$.
The fact that $K$ is infinite implies that
$$|K|=A|K|=|\KK|=|\comp R|>|R|=|K|$$
a contradiction.
\end{proof}

The proof of Proposition~\ref{prop110526a} translates directly to give the following.

\begin{prop}\label{prop110526a'}
Assume that  $R$ is a discrete valuation ring that is not complete, with $\m=XR$.
Set  $E=E_R(k)=E_{\comp R}(k)$,
and consider the quotient fields $\K=Q(R)$ and $\KK=Q(\comp R)$.
If $[\KK:\K]=A<\infty$, 
then there are $\comp R$-module isomorphisms
$$\Ext{i}{\comp R}{R}\cong
\begin{cases}
0 & \text{if $i\neq 1$} \\
E\oplus \KK^{A-2} & \text{if $i= 1$.}
\end{cases}$$
\end{prop}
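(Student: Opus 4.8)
The plan is to run the proof of Proposition~\ref{prop110526a} essentially verbatim, observing that Claims~1, 2, and 4 there never used the hypothesis $[\KK:\K]=\infty$, so they transfer unchanged; the only substantive differences occur in Claim~3 and in the concluding count, where the infinite cardinal arithmetic is replaced by a finite $\K$-dimension count.

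First I would reproduce the three unchanged claims. Since $\id_R(R)=1$ and $\Hom{\comp R}{R}=0$ --- the latter because a DVR has no nonzero complete submodule --- we obtain $\Ext i{\comp R}{R}=0$ for $i\neq 1$; the Snake Lemma applied to~\eqref{eq110526a} gives $\comp R/R\cong\KK/K$; and the splitting argument applied to $0\to\Hom{\comp R}{K}\to\Hom{\comp R}{E}\to\Ext 1{\comp R}{R}\to 0$, together with $\dim_k\Hom[\comp R]{k}{\Ext 1{\comp R}{R}}=1$, yields $\Ext 1{\comp R}{R}\cong E\oplus\KK^{(C)}$ for some cardinal $C$. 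Thus everything reduces to computing $C$. For Claim~3 I would run the same chain of isomorphisms for $\Hom{\comp R}{\comp R/R}$; the sole change is that, $A$ now being finite, the product $(\KK/K)^A$ is automatically a finite direct sum, so $\Hom{\comp R}{\comp R/R}\cong(\KK/K)^{(A)}$ directly, with no need to pass to a larger cardinal.

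The heart of the matter is the final count. Feeding Claims~1 and~3 and Lemma~\ref{lem110601a}\eqref{lem110601a3} into the long exact sequence of $0\to R\to\comp R\to\comp R/R\to 0$ produces, exactly as in the infinite case, the short exact sequence $0\to\comp R\to(\KK/K)^{(A)}\to\Ext 1{\comp R}{R}\to 0$. Applying $(-)_X$ and using $E_X=0$, $\comp R_X\cong\KK$, and the fact that $X$ acts invertibly on $\KK/K$, this becomes a short exact sequence of $\K$-vector spaces $0\to\KK\to(\KK/K)^{(A)}\to\KK^{(C)}\to 0$. Counting $\K$-dimensions, with $\dim_\K\KK=A$ and hence $\dim_\K(\KK/K)=A-1$, additivity gives $A(A-1)=A+AC$, whence $C=A-2$, so $\Ext 1{\comp R}{R}\cong E\oplus\KK^{A-2}$.

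I do not expect a serious obstacle, since finiteness of $A$ makes the bookkeeping strictly easier than in Proposition~\ref{prop110526a}. The points needing care are the localization step --- verifying $(\KK/K)_X=\KK/K$ and $E_X=0$ so that the localized sequence really is the displayed sequence of $\K$-vector spaces --- and the observation that $A\geq 2$ (since $R$ incomplete forces $\K\subsetneq\KK$), which is what makes dividing the dimension equation by $A$ legitimate and $A-2$ a genuine nonnegative count.
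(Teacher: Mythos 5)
Your proposal is correct and is precisely the translation the paper has in mind: the paper offers no separate proof beyond the remark that the argument for Proposition~\ref{prop110526a} ``translates directly,'' and you have carried out that translation faithfully, replacing the infinite cardinal arithmetic in Claim~3 and the final count with the finite $\K$-dimension computation $A(A-1)=A+AC$, which gives $C=A-2$. Your attention to the localization details ($E_X=0$, $X$ invertible on $\KK/\K$) and to the fact that $A\geq 2$ (cf.\ Remark~\ref{disc111130b}) covers the only points where care is needed.
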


\begin{disc}\label{disc111130b}
It is worth noting that, in the notation of Proposition~\ref{prop110526a'},
we cannot have $A=1$. Indeed, if $A=1$, then we have $\KK=\K$,
and the proof of Proposition~\ref{prop110526a'} shows that
$\comp R/R\cong\KK/\K=0$, contradicting the assumption that 
$R$ is not complete.

On the other hand,
Nagata~\cite[(E3.3)]{nagata:lr} shows how to build a ring $R$
such that $A=2$, which then has 
$\Ext{i}{\comp R}{R}\cong E$ by Proposition~\ref{prop110526a'}.
\end{disc}

\section*{Acknowledgments}

We are grateful 
to \texttt{a-fortiori@mathoverflow.net}, Phil Griffith, Bruce Olberding, Irena Swanson
for helpful discussions about this material. We also thank the anonymous referee
for  valuable comments and suggestions.

\providecommand{\bysame}{\leavevmode\hbox to3em{\hrulefill}\thinspace}
\providecommand{\MR}{\relax\ifhmode\unskip\space\fi MR }
\providecommand{\MRhref}[2]{%
  \href{http://www.ams.org/mathscinet-getitem?mr=#1}{#2}
}
\providecommand{\href}[2]{#2}

\end{document}